\theoremstyle{plain} 
\newtheorem*{thm}{Theorem} 
\begin{document} 
\title{An embedding constant for the Hardy space of Dirichlet series} 
\date{\today} 

\author{Ole Fredrik Brevig} \address{Department of Mathematical Sciences, Norwegian University of Science and Technology (NTNU), NO-7491 Trondheim, Norway} \email{ole.brevig@math.ntnu.no}

\thanks{The author is supported by Grant 227768 of the Research Council of Norway.}

\subjclass[2010]{Primary 30B50. Secondary 15A63.}

\begin{abstract}
	A new and simple proof of the embedding of the Hardy--Hilbert space of Dirichlet series into a conformally invariant Hardy space of the half-plane is presented, and the optimal constant of the embedding is computed. 
\end{abstract}

\maketitle Let $\mathscr{H}^2$ denote the Hardy--Hilbert space of Dirichlet series, $f(s) = \sum_{n=1}^\infty a_n n^{-s}$, with square summable coefficients, and set
\[\|f\|_{\mathscr{H}^2} := \left(\sum_{n=1}^\infty |a_n|^2 \right)^\frac{1}{2}.\]
Using the Cauchy--Schwarz inequality, we find that a Dirichlet series $f\in\mathscr{H}^2$ is absolutely convergent in the half-plane $\mathbb{C}_{1/2} := \{s \,:\, \Re(s)>1/2\}$. To see that $\mathbb{C}_{1/2}$ is the largest half-plane of convergence for $\mathscr{H}^2$, consider $f(s) = \zeta(1/2+\varepsilon+s)$, where $\zeta$ denotes the Riemann zeta function and $\varepsilon>0$.

When studying function and operator theoretic properties of $\mathscr{H}^2$, it has proven fruitful to employ the embedding of $\mathscr{H}^2$ into the conformally invariant Hardy space of $\mathbb{C}_{1/2}$ (see e.g.~\cite[Sec.~9]{QS15}). The embedding inequality takes on the form 
\begin{equation}
	\label{eq:embedding} \|f\|_{H^2_{\operatorname{i}}} := \left(\frac{1}{\pi}\int_{-\infty}^\infty |f(1/2+it)|^2\,\frac{dt}{1+t^2}\right)^\frac{1}{2} \leq C\|f\|_{\mathscr{H}^2}. 
\end{equation}
Observe that the embedding inequality \eqref{eq:embedding} implies that Dirichlet series in $\mathscr{H}^2$ are locally $L^2$-integrable on the line $\Re(s)=1/2$. Indeed, the proofs of \eqref{eq:embedding} in the literature go via the local (but equivalent) formulation
\begin{equation}
	\label{eq:local} \sup_{\tau\in\mathbb{R}} \left(\int_{\tau}^{\tau+1} |f(1/2+it)|^2 \, dt\right)^\frac{1}{2} \leq \widetilde{C}\|f\|_{\mathscr{H}^2}. 
\end{equation}
To prove \eqref{eq:local}, one can use a general Hilbert--type inequality due to Montgomery and Vaughan \cite{MV74} or a version of the classical Plancherel--Polya inequality \cite[Thm.~4.11]{HLS97}. It is also possible to give Fourier analytic proofs of \eqref{eq:local}, the reader is referred to \cite[pp.~36--37]{OS12} and \cite[Sec.~1.4.4]{QQ13}. It should be pointed out that these proofs do not give a precise value for either of the constants $C$ and $\widetilde{C}$.

This note contains a new and simple proof of \eqref{eq:embedding}, which additionally identifies the optimal constant $C$. The proof is based on the observation that the $H^2_{\operatorname{i}}$-norm of a Dirichlet series is associated to a Hilbert--type bilinear form which is easy to estimate precisely.

\begin{thm}
	Suppose that $f(s) = \sum_{n=1}^\infty a_n n^{-s}$ is in $\mathscr{H}^2$. Then
	\[\left(\frac{1}{\pi}\int_{-\infty}^\infty \left|f\left(1/2+it\right)\right|^2\,\frac{dt}{1+t^2}\right)^\frac{1}{2} < \sqrt{2}\|f\|_{\mathscr{H}^2},\]
	and the constant $\sqrt{2}$ is optimal. 
\end{thm}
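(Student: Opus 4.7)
\medskip

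\noindent\textbf{Proof plan.} My plan is to expand the $H^2_{\operatorname{i}}$-norm into a bilinear form in the coefficients of $f$, bound the resulting form by Schur's test, and exhibit a near-extremal sequence.

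First, I would write $|f(1/2+it)|^2 = \sum_{m,n} a_m \overline{a_n}(mn)^{-1/2}(n/m)^{it}$ and interchange summation and integration (justified by absolute convergence after truncation, then a limiting argument, since we may first assume $f$ is a Dirichlet polynomial and take limits). Using the Poisson-kernel identity
\[
\frac{1}{\pi}\int_{-\infty}^\infty \frac{e^{i\xi t}}{1+t^2}\,dt = e^{-|\xi|},\qquad \xi\in\mathbb{R},
\]
with $\xi=\log(n/m)$ gives $e^{-|\log(n/m)|}=\min(m,n)/\max(m,n)$. Hence
\[
\|f\|_{H^2_{\operatorname{i}}}^2 \;=\; \sum_{m,n\ge 1} a_m \overline{a_n}\, K(m,n),\qquad K(m,n) := \frac{\min(m,n)^{1/2}}{\max(m,n)^{3/2}}.
\]

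Next I would apply Schur's test with the weight $w_n = n^{-1/2}$. Splitting the sum $\sum_n K(m,n) w_n$ at $n=m$, I would obtain
\[
\sum_{n\le m}\frac{n^{1/2}}{m^{3/2}}\cdot\frac{1}{n^{1/2}} \;+\; \sum_{n>m} \frac{m^{1/2}}{n^{3/2}}\cdot\frac{1}{n^{1/2}} \;=\; \frac{1}{m^{1/2}} + m^{1/2}\!\!\sum_{n>m}\frac{1}{n^2},
\]
and then use the strict inequality $\sum_{n>m} n^{-2} < \int_m^\infty x^{-2}\,dx = 1/m$ to get $\sum_n K(m,n) w_n < 2 w_m$. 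Schur's test (in its symmetric form with the same weight on both sides, valid since $K$ is symmetric) then yields $\|f\|_{H^2_{\operatorname{i}}}^2 \le 2\|f\|_{\mathscr{H}^2}^2$, and the strictness of the Schur inequality upgrades this to strict inequality for nonzero $f$.

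Finally, for optimality I would test against $f_\varepsilon(s) = \zeta(1/2+\varepsilon+s)$, so that $a_n = n^{-1/2-\varepsilon}$. Then $\|f_\varepsilon\|_{\mathscr{H}^2}^2 = \zeta(1+2\varepsilon)\sim 1/(2\varepsilon)$, while the bilinear form above becomes
\[
\zeta(2+2\varepsilon) + 2\sum_{n\ge 2} n^{-2-\varepsilon}\!\!\sum_{m=1}^{n-1} m^{-\varepsilon},
\]
and an elementary estimate of the inner sum shows the total behaves like $1/\varepsilon$ as $\varepsilon\to 0^+$, giving ratio tending to $2$ and confirming that $\sqrt{2}$ is optimal.

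The main obstacle I anticipate is justifying the Fubini step cleanly (handled by first reducing to Dirichlet polynomials and then taking monotone limits in $N$) and extracting the precise asymptotics for the extremizer; both are standard but need to be written carefully. The Schur test itself is the conceptually clean core of the proof.
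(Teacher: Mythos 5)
Your proposal is correct and follows essentially the same route as the paper: the Poisson-kernel identity yields the same kernel (your $\min(m,n)^{1/2}/\max(m,n)^{3/2}$ equals the paper's $\sqrt{mn}/[\max(m,n)]^2$), and your Schur test with weight $n^{-1/2}$ is precisely the paper's Cauchy--Schwarz splitting of the bilinear form, with identical row-sum computation and the same strict bound $1+m\sum_{n>m}n^{-2}<2$. The optimality argument via $\zeta(1/2+\varepsilon+s)$ and integral comparison also matches the paper's.
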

\begin{proof}
	Let $x$ be a positive real number. We begin by computing
	\[I(x) := \frac{1}{\pi} \int_{-\infty}^\infty x^{it}\,\frac{dt}{1+t^2} = \frac{1}{\pi} \int_{-\infty}^\infty \cos(|\log{x}|\,t)\,\frac{dt}{1+t^2} = e^{-|\log{x}|} = \frac{1}{\max(x,1/x)}.\]
	Expanding $|f(1/2+it)|^2$, we find that 
	\begin{equation}
		\label{eq:expanded} \|f\|_{H^2_{\operatorname{i}}}^2 = \sum_{m=1}^\infty \sum_{n=1}^\infty \frac{a_m \overline{a_n}}{\sqrt{mn}} I(n/m) = \sum_{m=1}^\infty \sum_{n=1}^\infty a_m \overline{a_n} \frac{\sqrt{mn}}{[\max(m,n)]^2}. 
	\end{equation}
	The identity \eqref{eq:expanded} will serve as the starting point for both the proof of the inequality $\|f\|_{H^2_{\operatorname{i}}}<\sqrt{2}\|f\|_{\mathscr{H}^2}$, and for the proof that $\sqrt{2}$ cannot be improved.
		
	Let us first consider the Hilbert--type (see \cite[Ch.~IX]{HLP}) bilinear form associated to \eqref{eq:expanded}. Given sequences $a,b\in\ell^2$, we want to estimate
	\[B(a,b) := \sum_{m=1}^\infty \sum_{n=1}^\infty a_m b_n \frac{\sqrt{mn}}{[\max(m,n)]^2}.\]
	By the Cauchy--Schwarz inequality, we find that
	\[|B(a,b)| \leq \left(\sum_{m=1}^\infty |a_m|^2 \sum_{n=1}^\infty \frac{m}{[\max(m,n)]^2}\right)^\frac{1}{2}\left(\sum_{n=1}^\infty |b_n|^2 \sum_{m=1}^\infty \frac{n}{[\max(m,n)]^2}\right)^\frac{1}{2}.\]
	Then $|B(a,b)|<2\|a\|_{\ell^2}\|b\|_{\ell^2}$, since
	\[\sum_{n=1}^\infty \frac{m}{[\max(m,n)]^2} = \sum_{n=1}^m \frac{m}{m^2} + \sum_{n=m+1}^\infty \frac{m}{n^2} < 1 + m\int_{m}^\infty \frac{dx}{x^2} = 2.\]
	Setting $b=\overline{a}$, we obtain the desired inequality $\|f\|_{H^2_{\operatorname{i}}}< \sqrt{2}\|f\|_{\mathscr{H}^2}$.
	
	For the optimality of $\sqrt{2}$, we again let $f(s) = \zeta(1/2+\varepsilon+s)$ for some $\varepsilon>0$. Clearly, $\|f\|_{\mathscr{H}^2}^2 = \zeta(1+2\varepsilon)$. We insert $f$ into \eqref{eq:expanded} and estimate the inner sums using integrals, which yields 
	\begin{align*}
		\|f\|_{H^2_{\operatorname{i}}}^2 &= \sum_{m=1}^\infty m^{-\varepsilon}\left(\frac{1}{m^2}\sum_{n=1}^m n^{-\varepsilon} + \sum_{n=m+1}^\infty\frac{ n^{-\varepsilon}}{n^2}\right) \\
		&> \sum_{m=1}^\infty m^{-\varepsilon}\left( \frac{1}{m^2}\frac{m^{1-\varepsilon}-1}{1-\varepsilon} + \frac{(m+1)^{-1-\varepsilon}}{1+\varepsilon}\right) \\
		&>\frac{\zeta(1+2\varepsilon)-\zeta(2+\varepsilon)}{1-\varepsilon}+\frac{\zeta(1+2\varepsilon)-1}{1+\varepsilon}. 
	\end{align*}
	Letting $\varepsilon\to0^+$, we conclude that if $\|f\|_{H^2_{\operatorname{i}}}\leq C\|f\|_{\mathscr{H}^2}$, then $C^2\geq2$.
\end{proof}

\bibliographystyle{amsplain} 
\bibliography{e} \end{document}